\newtheorem{lemma}{Lemma}
\newtheorem{theorem}{Theorem}
\newtheorem{corollary}{Corollary}
\def\E{\mathbb{E}}
\def\P{\mathbb{P}}
\def\I#1{{\mathbf{1}}_{#1}}
\title{An Inequality for the Sum of Independent Bounded Random Variables}
\author{Christopher R. Dance \\
{\small Xerox Research Centre Europe, 6 chemin de Maupertuis, 38240 Meylan, France}\\
{\small {{\tt dance@xrce.xerox.com}\ Phone: +33 4 76 61 51 37}}
}
\date{September 2012}
\begin{document}
\maketitle
\abstract{We give a simple inequality for the sum of independent bounded random variables.
This inequality improves on the celebrated result of Hoeffding in a special case.
It is optimal in the limit where the sum tends to a Poisson random variable.}

\section{Introduction}
Modern machine learning and stochastic programming are largely based on inequalities relating to the sums of random variables.
Hoeffding~\cite{Hoeffding63} proposed several such bounds, 
which were in turn improved by Talagrand~\cite{Talagrand95}, Pinelis~\cite{Pinelis98} and Bentkus~\cite{Bentkus04}.
In this paper we prove the following related result.

\begin{theorem}
\label{theorem1}
Suppose that $S = \sum_{i=1}^n X_i$ is a sum of independent random variables with
$\mathbb{P}(0\leq X_i \leq 1)=1$ for $1\leq i\leq n$ and $\E S = \lambda$.
Then 
\begin{align}
\label{theorem1_1}
\P(S\leq 1) &\leq \max\left\{ \left(1 + \lambda -\frac{\lambda}{n}\right)\left( 1 - \frac{\lambda}{n} \right)^{n-1}, 
\left( 1 - \frac{\lambda - 1}{n-1} \right)^{n-1} \right\} \\
\label{theorem1_2}
&\le \max\{ 1 + \lambda, e \} e^{-\lambda}.
\end{align}
\end{theorem}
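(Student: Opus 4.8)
My plan is to prove \eqref{theorem1_1} by identifying the worst‑case distribution, and then derive \eqref{theorem1_2} from it by elementary estimates. \emph{Reduction to two‑point laws.} Fix the means $\mu_i=\E X_i\in[0,1]$, so that $\sum_i\mu_i=\lambda$. By independence, $\P(S\le1)=\E\,g_i(X_i)$, where $g_i(x)=\P\bigl(\sum_{j\ne i}X_j\le 1-x\bigr)$ is non‑increasing and does not depend on the law of $X_i$. Hence, for each $i$, $\P(S\le1)$ is an affine functional of the law of $X_i$ on the weak‑$*$ compact, convex set of probability measures on $[0,1]$ with mean $\mu_i$, whose extreme points are supported on at most two points. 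Replacing $X_1,\dots,X_n$ one at a time by such extreme laws cannot decrease $\P(S\le1)$ and leaves all $\mu_i$ (hence $n$ and $\lambda$) unchanged, so I may assume each $X_i$ takes exactly two values $a_i\le b_i$ in $[0,1]$.

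\emph{Reduction to Bernoulli variables.} The crucial step, which I expect to be the main obstacle, is to show that the supremum is still attained when each $X_i$ is $\{0,1\}$‑valued, i.e.\ $X_i\sim\mathrm{Bernoulli}(\mu_i)$. One cannot simply replace a single $X_i$ by $\mathrm{Bernoulli}(\mu_i)$: with the other variables frozen, $\P(S\le1)$ is the value at $\mu_i$ of the chord of $g_i$ through $(a_i,g_i(a_i))$ and $(b_i,g_i(b_i))$, and that chord is best when $a_i,b_i$ are the contact points of the concave envelope of $g_i$ bracketing $\mu_i$, which need not be $0$ and $1$. Instead one must argue globally, driving the two support points of each variable to $\{0,1\}$ by smoothing moves that simultaneously rebalance the means so that $\lambda$ is preserved; making this rigorous is the delicate part.

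\emph{Optimising over Bernoulli configurations.} Now $S$ is Poisson–binomial with $\sum_i\mu_i=\lambda$. Putting $v_i=1-\mu_i\in[0,1]$, so $\sum_i v_i=n-\lambda$, one has the identity $\P(S\le1)=\P(S=0)+\P(S=1)=e_{n-1}(v)-(n-1)\,e_n(v)$, where $e_j$ denotes the $j$‑th elementary symmetric polynomial. Two or more vanishing $v_i$ give $\P(S\le1)=0$; exactly one gives $\P(S\le1)=\prod_{j\ne i}(1-\mu_j)\le\bigl(1-\tfrac{\lambda-1}{n-1}\bigr)^{n-1}$ by AM–GM, the second expression in \eqref{theorem1_1}. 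With all $v_i>0$, a short computation of $\partial_i-\partial_j$ applied to $e_{n-1}(v)-(n-1)e_n(v)$ shows every interior critical point has all $v_i$ equal, namely $v_i=1-\tfrac{\lambda}{n}$, yielding $\bigl(1-\tfrac{\lambda}{n}\bigr)^{n-1}\bigl(1+\lambda-\tfrac{\lambda}{n}\bigr)$, the first expression; any maximiser on the boundary $\{v_i\in\{0,1\}\text{ for some }i\}$ reduces, via the previous cases and induction on $n$ to discard coordinates with $\mu_i=0$, to one of these two values. Taking the larger of the two proves \eqref{theorem1_1}.

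\emph{From \eqref{theorem1_1} to \eqref{theorem1_2}.} This is elementary calculus. By $(1+x/m)^m\le e^x$, the second expression is at most $e^{-(\lambda-1)}=e\,e^{-\lambda}$ for every $n\ge2$. For the first expression: if $\lambda\le1$ it is a probability, hence at most $1\le e^{1-\lambda}$; if $\lambda\ge1$ it equals $\P(\mathrm{Bin}(n,\lambda/n)\le1)$, which one checks does not exceed its limit $(1+\lambda)e^{-\lambda}$. Since $\max\{(1+\lambda)e^{-\lambda},\,e\,e^{-\lambda}\}=\max\{1+\lambda,e\}\,e^{-\lambda}$, inequality \eqref{theorem1_2} follows.
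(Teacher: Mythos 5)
Your first reduction (to two-point laws, by fixing the means and observing that $\P(S\le 1)$ is affine in each marginal law, so one may pass to extreme points of the moment set) is sound and parallels the paper's use of the Mulholland--Rogers representation theorem. But the step you yourself flag as ``the delicate part'' --- passing from two-point variables to $\{0,1\}$-valued variables while keeping $n$ and $\lambda$ fixed --- is exactly where your proposal stops being a proof: you describe the obstacle (a single-coordinate replacement by $\mathrm{Bernoulli}(\mu_i)$ can decrease $\P(S\le1)$) and then only gesture at ``smoothing moves that simultaneously rebalance the means,'' with no actual argument. The paper resolves this by \emph{not} insisting on preserving the mean or the number of summands: writing $T=a+\sum_i c_iB_i$, it either merges two summands whose combined range fits in $[0,1]$ (and recurses through the two-point reduction with $n-1$ variables), or, when $c_i+c_j>1-a$ for all pairs, constructs an explicit $\{0,1\}$-sum $U$ on at most $n$ terms satisfying only $\P(T\le1)\le\P(U\le1)$ and $\E T\le\E U$. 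The slack thus introduced (larger mean, possibly fewer summands) is then repaired by a separate monotonicity lemma: the extremal bound $H_n(\lambda):=\max\{F_n(\lambda),G_n(\lambda)\}$, with $F_n(\lambda):=\left(1+\lambda-\frac{\lambda}{n}\right)\left(1-\frac{\lambda}{n}\right)^{n-1}$ and $G_n(\lambda):=\left(1-\frac{\lambda-1}{n-1}\right)^{n-1}$, is non-increasing in $\lambda$ and non-decreasing in $n$. Without either a rigorous version of your global rebalancing or such a relaxation-plus-monotonicity device, the chain from \eqref{theorem1_1}'s candidate extremals back to a general $S$ is broken.

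Two further places where your sketch silently relies on that same missing monotonicity. First, in your Bernoulli optimisation the boundary case $\mu_i=0$ is dispatched ``by induction on $n$,'' which needs $\max\{F_{n-1}(\lambda),G_{n-1}(\lambda)\}\le\max\{F_n(\lambda),G_n(\lambda)\}$; this is true but not obvious --- the paper proves $F_n\le F_{n+1}$ only for $\lambda\ge 2/\sqrt{3}$ and covers smaller $\lambda$ via $F_n\le G_n$. Second, your passage to \eqref{theorem1_2} asserts that for $\lambda\ge1$ the first expression ``does not exceed its limit $(1+\lambda)e^{-\lambda}$''; as a pointwise claim this is false: for $n=2$, $\lambda=1$ one has $F_2(1)=\tfrac34>2e^{-1}\approx0.736$, and the failure persists for $\lambda$ slightly above $1$. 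The bound \eqref{theorem1_2} survives only because $\max\{1+\lambda,e\}e^{-\lambda}=e^{1-\lambda}$ there, and the correct route for $1\le\lambda<2/\sqrt{3}$ is $F_n\le G_n\le e^{1-\lambda}$, with the monotone comparison $F_n\le(1+\lambda)e^{-\lambda}$ reserved for $\lambda\ge2/\sqrt{3}$. These items are fixable, but the fixes are essentially the content of the paper's Lemma~\ref{lemmaV}, which your proposal currently takes for granted.
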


In this context, Hoeffding's inequality states that for all $\lambda \ge 1$
\begin{align}
\P(S\le 1) \le \lambda \left( 1 + \frac{1-\lambda}{n}\right)^{n-1}.
\end{align}
Theorem~\ref{theorem1} is not as general as Hoeffding's inequality since it only allows us to bound $\P(S\leq 1)$
rather than $\P(S\leq t)$ for any positive $t$.
However, from Theorem~\ref{theorem1}, we may derive Corollary~\ref{Corollary1} which states that
\begin{align}
\mathbb{P}(S\leq 1) \leq e^{1 - r\E S} \text{ where } r = 0.841405\dots .
\end{align}
In contrast, the strongest such result that can be obtained from Hoeffding's bound is 
\begin{align}
\mathbb{P}(S \leq 1) \leq e^{1 - \left(1 - e^{-1}\right) \E S} \text{ where } 1 - e^{-1} = 0.6321\dots .
\end{align}
Thus Theorem~\ref{theorem1} improves on the Hoeffding bound.

It is interesting to compare our result with Theorem~1.2 of Bentkus~\cite{Bentkus04} in the form of his inequality~1.1.
This states that for a sequence of bounded independent random variables $Y_i$ 
such that $\P(0\leq Y_i \leq 1)$ we have
\begin{align}
\P\left( \sum_{i=1}^n Y_i \geq x \right) \leq e\P(B_n \geq x)
\end{align}
where $B_n \sim \mathrm{binomial}(p, n)$ with 
$p := \sum_{i=1}^n \E Y_i / n$.
If we set $X_i := 1-Y_i$ and $S =\sum_{i=1}^n (1 - Y_i)$  in order to match the random variables in our Theorem~\ref{theorem1}, 
Bentkus's result gives 
\begin{align*}
\P(S \leq 1) &= \P\left( \sum_{i=1}^n Y_i \geq n - 1\right) \leq e \left( p^n + n (1-p) p^{n-1}\right) .
\end{align*}
If we set $m := \E S$, so that $p = 1 - \frac{m}{n}$, 
we have $p^n = \left( 1 - \frac{m}{n} \right)^n \leq e^{-m}$ and $p + n(1-p) \leq 1+m$ so that
\begin{align*}
\P(S \leq 1) \leq \frac{e}{p} (1+\E S ) e^{-\E S}.
\end{align*}
This bound is a factor of $e$ larger than our result for all $\E S \geq e-1$.

Furthermore, Theorem~\ref{theorem1} is optimal in the following sense. 
If 
\begin{align*}
S\sim \mathrm{binomial}\left(\frac{\lambda}{n}, n\right)
\end{align*}
then
\begin{align*}
\P(S\leq 1) = \mathbb{P}(S = 0) + \mathbb{P}(S=1) = 
\left( 1 - \frac{\lambda}{n} \right)^{n} + \lambda \left( 1 - \frac{\lambda}{n} \right)^{n-1}
\end{align*}
corresponding to the first term in the `$\max$' of~(\ref{theorem1_1}),
while if 
\begin{align*}
S\sim 1 + \mathrm{binomial}\left(\frac{\lambda-1}{n-1}, n-1\right)
\end{align*}
then we get the second term in the `$\max$'
\begin{align*}
\P(S\leq 1) &= \left( 1 - \frac{\lambda-1}{n-1} \right)^{n-1} .
\end{align*}
Similarly, in the $n$-independent form of our bound~(\ref{theorem1_2}), 
if $S\sim \mathrm{Poisson}(\lambda)$ then
\begin{align*}
\P(S\leq 1) = \mathbb{P}(S = 0) + \mathbb{P}(S=1) = 
e^{-\lambda} + e^{-\lambda} \lambda 
\end{align*}
corresponding to the first term in the `$\max$' of~(\ref{theorem1_2}).
Similarly, if $S\sim 1 + \mathrm{Poisson}(\lambda-1)$ then we get the second term in the `$\max$'
\begin{align*}
\P(S\leq 1) &= e^{1-\lambda} .
\end{align*}
While the sum of a finite collection of bounded random variables $\sum_{i=1}^n X_i$ cannot have a Poisson distribution, 
the law of small numbers implies that the Poisson
distribution is the limit as $n\rightarrow\infty$ of the sum of a suitable collection of random variables $(X_i)_{i=1, 2, \dots, n}$.
For instance if each $X_i$ is a Bernoulli random variable taking value $1$ with probability $\lambda/n$ 
and value $0$ otherwise, then following limiting probability mass function is Poisson
\begin{align}
\lim_{n\rightarrow \infty} \mathbb{P}\left(\sum_{i=1}^n X_i = x\right) = e^{-\lambda} \frac{\lambda^x}{x!} \ \text{for} \ x\in \mathbb{Z}_+ .
\end{align}

\section{Proof of Theorem~\ref{theorem1}}
In this section, we define four families of random sums $\mathcal{S}_n, \mathcal{T}_n$, $\mathcal{U}_n$ and $\mathcal{V}_n$.
Then we present Lemmas~\ref{lemmaS}, \ref{lemmaT}, \ref{lemmaU} and~\ref{lemmaV} that relate these families, and combine these results to prove Theorem~\ref{theorem1}.

The random variable considered in Theorem~\ref{theorem1} is from the family
$\mathcal{S}_n$ of random variables $S$ of the form
\begin{align}
\label{SLinea}
S := \sum_{i = 1}^n X_i
\end{align}
where $X_i$ are independent random variables with $X_i\in [0,1]$.
Family $\mathcal{T}_n$ is the set of Bernoulli sums $T$ of the form
\begin{align}
\label{TLinea}
T := \sum_{i = 1}^n Y_i
\end{align}
where $Y_i$ are independent random variables taking values $a_i$ or $b_i$
with $a_i, b_i \in [0,1]$, for $i = 1, 2, \dots, n$.
Family $\mathcal{U}_n$ is the set of Bernoulli sums $U$ of the form
\begin{align}
\label{ULinea}
U := \sum_{i=1}^n B_i
\end{align}
with each Bernoulli random variable $B_i$ 
taking the value 0 or 1.
Finally, family $\mathcal{V}_n$ is the set of shifted binomial random variables $V$ with 
any parameter $p\in [0,1]$ and with either of the following two forms
\begin{align}
\label{VLinea}
V \sim \mathrm{binomial}(p, n) \quad \text{ or } \quad V \sim 1 + \mathrm{binomial}\left(\frac{np-1}{n-1}, n-1\right) .
\end{align}

\begin{lemma}
\label{lemmaS}
For any random sum $S\in\mathcal{S}_n$, there exists
a random sum $T\in\mathcal{T}_n$ such that
$\E S = \E T$ and $\P(S\le 1) \le \P(T\le 1).$
\end{lemma}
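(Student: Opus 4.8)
The plan is to convert $S$ into a member of $\mathcal T_n$ by replacing the summands $X_1,\dots,X_n$ one at a time, each replacement leaving $\E S$ unchanged and not decreasing $\P(S\le 1)$. Fix an index $j$ and set $R_j:=\sum_{i\ne j}X_i$, which is nonnegative and independent of $X_j$. Regarding $\P(S\le 1)$ as a function of the law of $X_j$ alone, independence gives
\begin{align*}
\P(S\le 1)=\E\big[\phi_j(X_j)\big],\qquad \phi_j(x):=\P(R_j\le 1-x)=F_{R_j}(1-x)\quad(x\in[0,1]),
\end{align*}
where $F_{R_j}$ is the distribution function of $R_j$. Being a right-continuous nondecreasing function evaluated at $1-x$, the map $\phi_j$ is nonincreasing and upper semicontinuous on $[0,1]$, and it depends only on the laws of the $X_i$ with $i\ne j$.

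The crux is the following claim: \emph{if $\phi\colon[0,1]\to\mathbb R$ is bounded and upper semicontinuous and $X$ is a $[0,1]$-valued random variable with $\E X=m$, then there is a random variable $Y$ taking at most two values in $[0,1]$ with $\E Y=m$ and $\E\phi(Y)\ge\E\phi(X)$.} To prove it, let $P_m$ be the set of Borel probability measures on $[0,1]$ with mean $m$. Then $P_m$ is convex and weak-$*$ compact, and the affine functional $\nu\mapsto\int\phi\,d\nu$ is weak-$*$ upper semicontinuous on it, since a bounded upper semicontinuous $\phi$ is a decreasing pointwise limit of continuous functions. Hence this functional attains its maximum over $P_m$, and by Bauer's maximum principle the maximum is attained at some extreme point $\nu^\ast$ of $P_m$. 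Finally, any extreme point of $P_m$ is supported on at most two points: if $\mathrm{supp}(\nu)$ had three or more points, one could pick a bounded measurable $h$ with $\|h\|_\infty\le 1$, not $\nu$-almost-everywhere zero, and $\int h\,d\nu=\int x\,h(x)\,d\nu(x)=0$ (two linear constraints on a function space of dimension at least three), so that $(1\pm h)\,\nu\in P_m$ and $\nu=\tfrac12\big((1+h)\nu+(1-h)\nu\big)$ would be a nontrivial convex combination inside $P_m$. Since the law of $X$ lies in $P_m$, the measure $\nu^\ast$ furnishes the required $Y$.

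Applying the claim with $\phi=\phi_j$ yields a random variable $Y_j$ taking at most two values in $[0,1]$, independent of the other summands, with $\E Y_j=\E X_j$ and $\P(Y_j+R_j\le 1)\ge\P(X_j+R_j\le 1)$. Replacing $X_j$ by $Y_j$ thus keeps the summands independent and $[0,1]$-valued, preserves $\E S$, and does not decrease $\P(S\le 1)$. Performing this replacement in turn for $j=1,2,\dots,n$ transforms $S$ into a sum $T=\sum_{i=1}^n Y_i\in\mathcal T_n$ (a summand that becomes essentially constant being the degenerate case $a_i=b_i$) with $\E T=\E S$ and $\P(S\le 1)\le\P(T\le 1)$, which is the assertion of the lemma.

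The main obstacle is the extremal claim, and within it the step from ``supremum'' to ``maximum'': a priori the best mean-preserving law for $X_j$ might only be a weak-$*$ limit of two-point laws, and it is precisely the upper semicontinuity of the reversed distribution function $x\mapsto F_{R_j}(1-x)$, together with the weak-$*$ compactness of $P_m$, that produces an exact two-point maximizer and lets the $n$-fold iteration terminate cleanly. If one prefers to avoid Bauer's principle, the claim can instead be proved first for finitely supported $X$ by the mass-shifting perturbation above (repeatedly moving mass among three support points so as to preserve the mean while not decreasing $\E\phi$, until at most two atoms remain) and then extended to general $X$ by approximation, again invoking upper semicontinuity.
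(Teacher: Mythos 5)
Your proof is correct, but it takes a genuinely different route from the paper. The paper proves Lemma~1 by a direct appeal to a representation theorem of Mulholland and Rogers (stated there as Lemma~2): with the constraint functions $\I{0\le x\le 1}-1$ and $x-\E X_i$, that theorem says the supremum of $\P(\sum_i X_i\le 1)$ over all product laws with the given means and $[0,1]$ supports is unchanged if each factor is restricted to step distributions with few atoms, and a short linear-independence count (ruling out $\kappa_i=3$) shows those atoms number at most two, i.e.\ the factors lie in the Bernoulli class defining $\mathcal{T}_n$. You instead give a self-contained, coordinate-by-coordinate argument: writing $\P(S\le 1)=\E\,\phi_j(X_j)$ with $\phi_j(x)=F_{R_j}(1-x)$, you observe $\phi_j$ is nonincreasing and upper semicontinuous, maximize $\nu\mapsto\int\phi_j\,d\nu$ over the weak-$*$ compact convex set of mean-$m$ laws on $[0,1]$, invoke Bauer's maximum principle (or Krein--Milman applied to the face of maximizers), and check that extreme points of that moment set are supported on at most two points via the standard $(1\pm h)\nu$ perturbation; iterating over $j=1,\dots,n$ gives $T\in\mathcal{T}_n$ with the same mean and no smaller probability. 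All the steps check out, including the semicontinuity of $x\mapsto F_{R_j}(1-x)$ (left-continuous and nonincreasing) and the fact that the degenerate one-point case is covered by $a_i=b_i$. What the two approaches buy: the paper's citation is very short and rests on a general theorem that treats all coordinates and arbitrarily many moment constraints at once; your argument avoids the external reference, makes the reduction elementary modulo standard functional analysis, and is explicit about why the optimum is actually \emph{attained} by a two-point law (compactness plus upper semicontinuity), a point which the sup-equals-sup formulation of Mulholland--Rogers leaves somewhat implicit in the paper's write-up.
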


Lemma~\ref{lemmaS} follows directly from Theorem~8 of Mulholland and Rogers~\cite{Mulholland58}, 
which we state as Lemma~2.
\begin{lemma}
\label{MulhollandRogers}
For each integer $i$ with $1\le i \le n$, let $f_{i1}(x), \dots, f_{ik}(x)$ 
be Borel-measurable functions, 
and let $K_i$ be the set of probability distribution functions
$F_i(x) : \mathbb{R}\rightarrow [0, 1]$ satisfying
\begin{align}
\label{momentCondition}
\int_{-\infty}^\infty f_{ij}(x) dF_i(x) = 0 \qquad \text{for $j=1, 2, \dots, k$}.
\end{align}
Let $E_i$ be the set of functions from $K_i$
that are step-functions having $\kappa_i$ jumps at points $x_{i1}, x_{i2}, \dots, x_{i\kappa_i}$ where
$1\le \kappa_i\le k+1$ and where the $\kappa_i$ vectors
\begin{align*}
(1, f_{i1}(x_{ij}), \dots, f_{ik}(x_{ij})) \qquad j = 1, 2, \dots, \kappa_i
\end{align*} 
are linearly independent.

Suppose that $g(x_1, x_2, \dots, x_n)$ is Borel-measurable as a function of the point $(x_1, \dots, x_n) \in \mathbb{R}^n$. Then 
\begin{align*}
\sup_{F_i(x) \in K_i} \int_{-\infty}^\infty \dots \int_{-\infty}^\infty g(x_1, \dots, x_n) dF_1(x_1) \dots dF_n(x_n) \\
\quad = 
\sup_{H_i(x) \in E_i} \int_{-\infty}^\infty \dots \int_{-\infty}^\infty g(x_1, \dots, x_n) dH_1(x_1) \dots dH_n(x_n) 
\end{align*}
provided the left-hand side is finite.
\end{lemma}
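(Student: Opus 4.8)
The plan is to reduce the optimisation one coordinate at a time, replacing the inner supremum over $F_i \in K_i$ by a supremum over the discrete measures $E_i$ without changing its value, and then to iterate. Since $E_i \subseteq K_i$, the feasible set on the right is contained in that on the left, so the right-hand side is at most the left-hand side, and the entire content of the statement is the reverse inequality. Writing the supremum over a product of variables as an iterated supremum, $\sup_{F_1, \dots, F_n} = \sup_{F_2, \dots, F_n}\ \sup_{F_1}$, it suffices to establish the following single-coordinate reduction, valid for any fixed probability measures on the remaining coordinates: if $h(x) := \int \cdots \int g(x, x_2, \dots, x_n)\, dF_2 \cdots dF_n$ denotes the partial integral in the first variable, then $\sup_{F_1 \in K_1} \int h\, dF_1 = \sup_{H_1 \in E_1} \int h\, dH_1$. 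Applying this to coordinate $1$ and taking $\sup_{F_2,\dots,F_n}$ on both sides replaces $K_1$ by $E_1$ while leaving the outer supremum intact; repeating for coordinates $2, 3, \dots, n$ in turn, where at each stage the already-reduced coordinates supply fixed discrete measures, yields the theorem.

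For the single-coordinate reduction, write $\Phi(x) := (1, f_{11}(x), \dots, f_{1k}(x)) \in \mathbb{R}^{k+1}$, so that membership $F_1 \in K_1$ is the moment condition $\int \Phi\, dF_1 = e_1$ with $e_1 = (1, 0, \dots, 0)$, while $H_1 \in E_1$ additionally requires $H_1$ to be finitely supported with linearly independent vectors $\{\Phi(x_j)\}$ at its atoms. The key claim is that for every $F_1 \in K_1$ there is an $H_1 \in E_1$ with $\int h\, dH_1 \ge \int h\, dF_1$; this immediately gives $\sup_{K_1} \le \sup_{E_1}$ and hence equality. To prove the claim I would first invoke the Richter--Rogosinski theorem applied to the $k+1$ integrable functions $(f_{11}, \dots, f_{1k}, h)$: any probability measure is matched in all these $k+1$ moments by a discrete probability measure on at most $k+2$ atoms. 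This produces a finitely supported $F_0 \in K_1$ with $\int h\, dF_0 = \int h\, dF_1$.

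It then remains to drive the atom count down to at most $k+1$ with linearly independent $\Phi$-vectors, never decreasing $\int h$. Whenever the vectors $\Phi(x_1), \dots, \Phi(x_m) \in \mathbb{R}^{k+1}$ at the current atoms are linearly dependent (which is automatic once $m \ge k+2$), choose a nontrivial relation $\sum_j c_j \Phi(x_j) = 0$ and perturb the weights by $w_j(t) = w_j + t c_j$. Because the first component of $\Phi$ is the constant $1$ we have $\sum_j c_j = 0$, so total mass is preserved; because the remaining components are the $f_{1\ell}$ the moment constraints are preserved; and $\int h\, dF_t = \int h\, dF_0 + t \sum_j c_j h(x_j)$ is affine in $t$. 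Choosing the sign of $t$ so that this does not decrease and increasing $|t|$ until some weight first reaches zero removes at least one atom while keeping $F_t \in K_1$ and not decreasing $\int h$. Iterating, the atom count strictly decreases until the surviving $\Phi$-vectors are linearly independent, at which point $m \le k+1$ and the measure lies in $E_1$, proving the claim.

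The main obstacle is measure-theoretic rather than algebraic. To form the partial integrals $h$ and to interchange integration with the iterated suprema I must justify Fubini--Tonelli and the Borel measurability of $h$ as a function of the free coordinate; this is where the hypothesis that the left-hand side is finite becomes essential, since it lets me restrict attention to configurations for which $g$ is integrable against the relevant product measures and keeps every supremum finite. A related point is that the suprema need not be attained, so the whole argument must be phrased as ``for each feasible measure there is a discrete one at least as good'' --- precisely the form of the key claim above --- rather than as a statement about maximisers, thereby sidestepping any appeal to weak-$*$ compactness of the non-compact families $K_i$. Verifying that the Richter--Rogosinski reduction applies coordinatewise, and in particular that $h$ is $F_1$-integrable, under only the finiteness hypothesis is the delicate step I expect to require the most care.
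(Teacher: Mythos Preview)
The paper does not prove this lemma at all: its entire proof is the single line ``See~\cite{Mulholland58}'', i.e.\ the result is quoted verbatim as Theorem~8 of Mulholland and Rogers. So there is no argument in the paper to compare your proposal against.

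That said, your sketch is essentially the standard route to results of this type and is in the spirit of the original Mulholland--Rogers proof: freeze all but one coordinate, recognise the inner problem as a generalised moment problem, and use a Carath\'eodory/Richter--Rogosinski reduction to pass to a finitely supported measure, followed by the affine perturbation trick along a linear dependence among the $\Phi(x_j)$ to prune atoms without decreasing the objective. The observations that $\sum_j c_j = 0$ (so total mass is preserved), that both signs of $c_j$ must occur (so the perturbation hits the boundary in either direction), and that one direction is non-decreasing for $\int h$, are exactly what is needed. You also correctly flag the one genuinely delicate point: the finiteness hypothesis on the left-hand side is what lets you invoke Fubini to define the partial integral $h$ and to apply Richter--Rogosinski to the $(k+1)$-tuple $(f_{11},\dots,f_{1k},h)$; carrying this through carefully (in particular ruling out that $\int h\,dF_1$ is undefined for some $F_1\in K_1$) is where the work lies, and Mulholland and Rogers devote some effort to it.
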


\begin{proof} See~\cite{Mulholland58}.
\end{proof} 

In the above Lemma, the conditions~(\ref{momentCondition}) can be interpreted as
moment conditions on random variables $X_i$ whose probability distribution functions are $F_i$, 
while the distribution functions in $E_i$ correspond to random variables whose support consists of a finite set of $\kappa_i$ points and which satisfy conditions~(\ref{momentCondition}).

\begin{proof}[Lemma~\ref{lemmaS}]
Let $\I{C}$ denote the indicator function for condition $C$
and let $X_i$ be the random variables defining $S$ for $1\le i \le n$.
In Lemma~\ref{MulhollandRogers}, put
\begin{align}
f_{i1}(x) := \I{0\le x \le 1} - 1, f_{i2}(x) := x - \E X_i
\end{align} 
which are both Borel-measurable functions.
Then the set $E_i$ of distribution functions corresponds to the set of random variables $Z_i$ 
which take on $1 \le \kappa_i \le 3$ distinct values, say at $z_{i1}, \dots, z_{i\kappa_i}$,
which satisfy
\begin{align}
\P(0\le Z_i \le 1) = 1 \text{ and } \E Z_i = \E X_i
\end{align}
and for which the vectors
$(1, f_{i1}(z_{ij}), f_{i2}(z_{ij}))$ are linearly independent for $1\le j \le \kappa_i$.

We now rule out the case $\kappa_i = 3$, since if $\P(0\le Z_i \le 1) = 1$
then the jumps must satisfy $f_{i1}(z_{ij}) = \I{0\le z_{ij} \le 1} -1 = 0$ 
and there are at most two linearly independent
vectors of the form 
\begin{align}
(1, 0, z_{ij} - \E X_i).
\end{align}
Thus the random variables $Z_i$ take on at most two values, say $a_i, b_i \in [0, 1]$, and 
so the random variables $Z_i$ match the definition of the random variables $Y_i$ defining the sum $T$.

Finally, if we set $g(x_1, \dots, x_n) := \I{\sum_{i=1}^n x_i \le 1}$, which is Borel-measurable,
and identify the distribution functions $F_i(x)$ with those of the random variables $X_i$ then Lemma~\ref{MulhollandRogers} gives
\begin{align}
\nonumber
\int_{-\infty}^\infty \dots \int_{-\infty}^\infty g(x_1, \dots, x_n) dF_1(x_1) \dots dF_n(x_n) 
\\
= \P\left(\sum_{i=1}^n X_i \le 1\right) \le \P\left(\sum_{i=1}^n Z_i \le 1\right)
\end{align}
since $\P\left(\sum_{i=1}^n X_i \le 1\right) \in [0,1] .$ 
This completes the proof. 
\end{proof}

\begin{lemma}
\label{lemmaT}
For any $T\in\mathcal{T}_n$ there exists a $U\in \mathcal{U}_1 \cup \mathcal{U}_2 \cup \cdots \cup \mathcal{U}_n$ such that 
\begin{align}
\label{Ucondition}
\P(T\le 1) \le \P(U\le 1) \ \text{ and } \ \E T \le \E U.
\end{align}
\end{lemma}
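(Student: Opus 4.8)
The plan is to pass to an extremal configuration and then straighten its summands into $\{0,1\}$-valued Bernoulli variables. First I would dispose of two easy cases. If $\E T\le 1$, take $U\in\mathcal{U}_1$ to be a single Bernoulli with $\P(U=1)=\E T$, so that $\E U=\E T$ and $\P(U\le 1)=1\ge\P(T\le 1)$. If $\P(T\le 1)=0$, take $U\in\mathcal{U}_n$ with parameters summing to $\E T$, which is possible since $\E T\le n$ for every $T\in\mathcal{T}_n$; then $\E U=\E T$ and $\P(U\le 1)\ge 0=\P(T\le 1)$. Henceforth assume $\E T>1$ (whence $\P(T\le 1)<1$) and $\P(T\le 1)>0$.

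The engine is a one-summand straightening step: suppose $Y$ takes values $a,b$ with $0\le a\le b\le 1$, suppose $S'\ge 0$ is independent of $Y$ and its law puts no mass on the open interval $(0,1)$ — which holds in particular when $S'$ is a sum of $\{0,1\}$-valued variables, since then $S'$ is integer valued — and let $B$ be the $\{0,1\}$-valued Bernoulli with $\E B=\E Y$. Then $\P(B+S'\le 1)\ge\P(Y+S'\le 1)$ while $\E(B+S')=\E(Y+S')$. This is a short computation: since $S'$ avoids $(0,1)$, $\P(S'\le s)=\P(S'=0)$ for every $s\in[0,1)$, so both sides reduce to expressions in $\P(S'\le 1)$ and $\P(S'=0)$ and the inequality follows from $\P(S'\le 1)\ge\P(S'=0)$, treating the cases $a>0$ and $a=0$ separately; the same computation with $a=b$ lets one replace a constant summand by a $\{0,1\}$-valued Bernoulli of the same mean. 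In every instance the number of summands is unchanged, and a summand that has become identically $0$ may be deleted — which is why the target class is $\mathcal{U}_1\cup\dots\cup\mathcal{U}_n$ rather than just $\mathcal{U}_n$.

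Next I would pass to a maximizer. The map $(a_i,b_i,p_i)_{i=1}^n\mapsto\P(\sum_i Y_i\le 1)$ is upper semicontinuous (a finite sum of products of nonnegative continuous weights with upper semicontinuous indicator terms $\I{\,\cdot\le 1}$), and the parameter set on which $\sum_i\E Y_i=\E T$ is compact, so $\P(\sum_i Y_i\le 1)$ attains a maximum $p^\ast\ge\P(T\le 1)$ over $\{T'\in\mathcal{T}_n:\E T'=\E T\}$. Choose a maximizer with the fewest non-$\{0,1\}$-valued summands; I claim this number is at most one. Granting this, the one remaining non-$\{0,1\}$ summand (if present) is independent of a sum of $\{0,1\}$-valued summands, so the straightening step converts it without decreasing $\P(\cdot\le 1)$ or the mean; deleting any null summands then produces $U\in\mathcal{U}_m$ for some $m\le n$ with $\E U\ge\E T$ and $\P(U\le 1)\ge p^\ast\ge\P(T\le 1)$, which proves the lemma.

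The hard part is the claim that a maximizer with fewest non-$\{0,1\}$ summands has at most one of them, and the obstruction is genuine: the straightening step really does fail when the complementary sum carries mass in $(0,1)$ — for instance when $S'$ is a constant strictly between $0$ and $1$ — so optimality has to be exploited rather than just straightening summands one at a time. The route I would take: at a maximizer each $Y_i$ maximizes $\E[\I{Y_i+S_i'\le 1}]=\E[G_i(Y_i)]$, with $G_i(t):=\P(S_i'\le 1-t)$, among two-valued variables on $[0,1]$ of its own mean, so — exactly as in Lemma~\ref{lemmaS}'s application of Lemma~\ref{MulhollandRogers} — the two values of $Y_i$ are contact points of the least concave majorant of $G_i$ on $[0,1]$; when $S_i'$ is integer valued this majorant is affine with contact set $\{0,1\}$, which forces $Y_i$ to be $\{0,1\}$-valued. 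To exclude two non-$\{0,1\}$ summands $Y_i,Y_j$ at a maximizer, I would seek a mean-preserving exchange between them — lowering the smaller value of one toward $0$ while raising a value of the other toward $1$ — that does not decrease $\P(\sum_k Y_k\le 1)$ yet strictly reduces the number of non-$\{0,1\}$ summands, contradicting the choice of maximizer.
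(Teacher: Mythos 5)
There is a genuine gap, and you have located it yourself: the entire lemma rests on the claim that a maximizer (with fewest non-$\{0,1\}$-valued summands) of $\P(\sum_i Y_i\le 1)$ at fixed mean has at most one summand taking a value in the open interval $(0,1)$, and this claim is never proved. Your straightening step is fine, but it only applies when the complementary sum $S_i'$ puts no mass on $(0,1)$, i.e.\ precisely when all the \emph{other} summands are already $\{0,1\}$-valued; so it cannot get you below two ``bad'' summands, and the least-concave-majorant argument via Lemma~\ref{MulhollandRogers} stalls for the same reason (when $S_i'$ is not integer-valued the majorant of $G_i(t)=\P(S_i'\le 1-t)$ can have interior contact points). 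The proposed repair --- a mean-preserving exchange between two bad summands, pushing one value toward $0$ and another toward $1$ --- is only announced (``I would seek\dots''), and it is not clear it exists as stated: lowering a value can only increase $\P(\sum_k Y_k\le 1)$ while raising a value can only decrease it, and the net effect depends delicately on how the thresholds $1-\sum$ interact with the atoms of the remaining summands. Making this work would require an analysis of essentially the same combinatorial structure the paper isolates, so as written the proof is a correct framework with its central step missing.

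For comparison, the paper avoids the variational machinery entirely and argues by induction on $n$ after writing $T=a+\sum_i c_iB_i$ with $a=\sum_i a_i$, $c_i=b_i-a_i$: if $a>1$ the bound is trivial; if some pair has $c_i+c_j\le 1-a$, the two summands $Y_i+Y_j$ are merged into a single $[0,1]$-valued variable and Lemma~\ref{lemmaS} plus the induction hypothesis finish; and if $c_i+c_j>1-a$ for all pairs, the event $\{T\le 1\}$ collapses to $\{\sum_{i\in C}B_i\le 1,\ \sum_{i\in D}B_i=0\}$ with $C=\{i:c_i\le 1-a\}$, $D=\{i:c_i>1-a\}$, and an explicit $U$ (either $1+\sum_{i\in D}B_i$ or $\sum_i B_i$) is exhibited. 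Note also that the paper exploits the slack allowed by the statement, $\E U\ge\E T$ rather than $\E U=\E T$ (e.g.\ when passing to $U=1+\sum_{i\in D}B_i$); your insistence on exact mean preservation at the maximization stage makes the task strictly harder than it needs to be. Either adopt an argument of this dichotomy type to settle your ``at most one bad summand'' claim, or supply a complete proof of the exchange step; without one of these the proposal does not establish the lemma.
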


\begin{proof}
We use induction on $n$.

If $n=1$ then we set $U=1$, so that 
\begin{align*}
\P(T\leq 1) = \P(U\leq 1) = 1 \text{ and } \E T \leq \E U
\end{align*}
directly satisfying~(\ref{Ucondition}).

If $n>1$ there are several cases to consider, for which it helps to first rewrite $T$. 
Recall that $T = \sum_{i=1}^n Y_i$ where $Y_i \in \{a_i, b_i\}$ and $0\le a_i \le b_i \le 1$. 
Thus we may write
\begin{align}
T = a + \sum_{i=1}^n c_i B_i
\end{align}
where $a := \sum_{i=1}^n a_i \geq 0$, $c_i := b_i - a_i \in [0,1]$ 
and where $B_i$ are independent Bernoulli random variables.

In the first case, if $a>1$ then we put $U = \sum_{i=1}^n B_i'$ 
with $\P(B_i' = 1)=1$ for all $i$. Then 
$\P(T \leq 1) = \P(U\le 1) = 0$ and $\E T \le \sum_{i=1}^n b_i \le n = \E U$, satisfying~(\ref{Ucondition}).

Secondly, if $c_i + c_j \leq 1-a$ for some pair $i, j$ with $i\neq j$, then consider the sum 
\begin{align}
S := X_{ij} + \sum_{k\in \{1, 2, \dots, n\}\backslash \{i, j\}} Y_k \text{ where } X_{ij} := Y_i + Y_j.
\end{align}
We have 
\begin{align*}
X_{ij} = a_i + a_j + c_i B_i + c_j B_j \leq a + c_i + c_j \leq 1
\end{align*}
for all realizations of $B_i, B_j$. 
Thus $S\in \mathcal{S}_{n-1}$ and we can apply Lemma~\ref{lemmaS} to show that 
$\E T = \E S \le \E T'$ and $\P(T\le 1) = \P(S\le 1) \le \P(T'\le 1)$ 
for some $T' \in  \mathcal{T}_{n-1}$.
The Lemma then follows by induction.

Otherwise, we have $a \in [0,1], c_i \in [0,1]$ and $c_i + c_j > 1-a$ for all $i$
and all $j\neq i$. The key observation is that the latter condition implies that
\begin{align*}
\P(T \leq 1) = \P\left(\sum_{i\in C} B_i \leq 1, \sum_{i\in D} B_i = 0\right)
\end{align*}
where 
$C :=\{i : c_i \leq 1 - a\}, D := \{i : c_i > 1 - a\}$.

If $\sum_{i\in C} \E B_i < 1$ and $C \neq \emptyset$, then we put $U = 1 + \sum_{i\in D} B_i$, 
noting that $U \in \cup_{m=1}^n \mathcal{U}_n$, giving
\begin{align*}
\P(U \leq 1) &= \P\left(\sum_{i\in D} B_i = 0\right) \\
&\geq \P\left(\sum_{i\in C} B_i \leq 1, \sum_{i\in D} B_i = 0\right) = \P(T \leq 1) \\
\text{and } \quad
\E U &= 1 + \sum_{i\in D} \E B_i \\
&\geq a + (1-a) \sum_{i\in C} \E B_i + \sum_{i\in D} \E B_i 
\ \text{ (as $\sum_{i\in C} \E B_i < 1$)}\\
& \geq a + \sum_{i\in C} c_i \E B_i + \sum_{i\in D} c_i \E B_i = \E T
\end{align*}
satisfying~(\ref{Ucondition}).

Finally, if $\sum_{i\in C} \E B_i \geq 1$ or $C = \emptyset$, then we put $U = \sum_{i=1}^n B_i$, noting that $U\in \mathcal{U}_n$ so that
\begin{align*}
\P(U \leq 1) &= \P\left(\sum_{i\in C} B_i + \sum_{i\in D} B_i \leq 1\right) \\
&\geq  \P\left(\sum_{i\in C} B_i \leq 1, \sum_{i\in D} B_i = 0\right) = \P(T \leq 1) \\
\text{and } \quad
\E U &= \sum_{i\in C} \E B_i + \sum_{i\in D} \E B_i \\
&\geq a + (1-a) \sum_{i\in C} \E B_i + \sum_{i\in D} \E B_i \\
& \geq a + \sum_{i\in C} c_i \E B_i + \sum_{i\in D} c_i \E B_i = \E T
\end{align*}
satisfying~(\ref{Ucondition}) and completing the proof. 
\end{proof}

\begin{lemma}
\label{lemmaU}
For any $U\in \mathcal{U}_n$ with $n\ge 1$, there exists a $V \in \cup_{m=1}^n \mathcal{V}_m$ such that
\begin{align}
\P(U\le 1) \le \P(V\le 1) \quad \text{ and } \quad \E U = \E V.
\end{align}
\end{lemma}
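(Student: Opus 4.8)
The plan is to reduce, by a pairwise smoothing argument on the Bernoulli parameters, to exactly the distributions in $\mathcal{V}_n$. Write $U=\sum_{i=1}^n B_i$ with $\P(B_i=1)=p_i$, put $\lambda:=\E U=\sum_{i=1}^n p_i\in[0,n]$, and set $g(p_1,\dots,p_n):=\P(U\le1)=\P(U=0)+\P(U=1)$, a polynomial in $(p_1,\dots,p_n)$. Since $\P(U\le1)$ depends on $U$ only through $(p_1,\dots,p_n)$, it is enough to produce $V\in\cup_{m=1}^n\mathcal{V}_m$ with $\E V=\lambda$ and $\P(V\le1)\ge g(p_1,\dots,p_n)$; I will do this by exhibiting a maximiser of $g$ on $\{p\in[0,1]^n:\sum_i p_i=\lambda\}$ whose associated sum lies in $\mathcal{V}_n$, except that coordinates equal to $0$ may first have to be discarded, which lowers $n$. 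So the overall proof is induction on $n$: for $n=1$ we have $\P(U\le1)=1$ and take $V\sim\mathrm{binomial}(p_1,1)$, and the cases $\lambda=0$, $\lambda=n$ are immediate with $V\sim\mathrm{binomial}(0,n)$, $V\sim\mathrm{binomial}(1,n)$; so assume $n\ge2$ and $0<\lambda<n$.

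The key step is to understand how $g$ behaves when two parameters $p_i,p_j$ vary with $s:=p_i+p_j$ fixed and the rest frozen. Splitting $\P(U=0)+\P(U=1)$ according to the values of $B_i,B_j$, and writing $A:=\prod_{k\ne i,j}(1-p_k)$ and $B:=\sum_{k\ne i,j}p_k\prod_{\ell\ne i,j,k}(1-p_\ell)$, one obtains
\begin{align*}
g &= (A+B)\bigl(1-p_i-p_j+p_ip_j\bigr)+A\bigl(p_i+p_j-2p_ip_j\bigr) \\
&= \bigl((A+B)(1-s)+As\bigr)+(B-A)\,p_ip_j ,
\end{align*}
so $g$ is \emph{affine} in the product $p_ip_j$ along this segment. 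Since $p_ip_j=p_i(s-p_i)$ is strictly concave in $p_i$ on the feasible interval $[\max(0,s-1),\min(1,s)]$, with interior maximum at $p_i=s/2$, we get a trichotomy at any maximiser: if $B>A$ the restriction of $g$ to the segment is strictly maximised at $p_i=p_j$; if $B<A$ it is strictly maximised at an endpoint of the interval, where one of $p_i,p_j$ equals $0$ or $1$; and if $B=A$ then $g$ is constant along the whole segment.

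Now take a maximiser $p^\ast$ of $g$ on $\{p\in[0,1]^n:\sum_i p_i=\lambda\}$ (the set is compact and $g$ continuous), and among all maximisers choose one with the greatest number of coordinates in $\{0,1\}$. The trichotomy forces any two coordinates of $p^\ast$ lying in $(0,1)$ to be equal: if such a pair were unequal, then necessarily $B=A$ for it, and sliding along the segment to an endpoint yields another maximiser with strictly more coordinates in $\{0,1\}$, a contradiction. Thus $p^\ast$ has $z$ coordinates equal to $0$, $o$ coordinates equal to $1$, and the rest equal to a common $q\in(0,1)$, with $\lambda=o+(n-z-o)q$; moreover $g(p^\ast)\ge g(\lambda/n,\dots,\lambda/n)\ge(1-\lambda/n)^n>0$ excludes $o\ge2$, since two coordinates equal to $1$ would give $U^\ast\ge2$ and $g(p^\ast)=0$. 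If $z\ge1$, then $U^\ast$ has the same law as a sum of $n-z\ge1$ Bernoulli variables with mean $\lambda$, and the induction hypothesis applied to that sum supplies $V\in\cup_{m=1}^{n-z}\mathcal{V}_m\subseteq\cup_{m=1}^{n}\mathcal{V}_m$ with $\E V=\lambda$ and $\P(V\le1)\ge g(p^\ast)\ge g(p_1,\dots,p_n)$. If $z=0$ and $o=0$ then $q=\lambda/n$ and $U^\ast\sim\mathrm{binomial}(\lambda/n,n)\in\mathcal{V}_n$; if $z=0$ and $o=1$ then $q=(\lambda-1)/(n-1)$ and $U^\ast\sim1+\mathrm{binomial}\bigl(\tfrac{\lambda-1}{n-1},\,n-1\bigr)$, which is the second form in $\mathcal{V}_n$ with parameter $p=\lambda/n$; in either case $V:=U^\ast$ works. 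I expect the main obstacle to be the fixed-sum computation and the extremal argument built on it — in particular spotting that $g$ is affine in $p_ip_j$ along such segments, which is what makes the pairwise smoothing valid and also sharp; the remaining bookkeeping (ruling out $o\ge2$, handling parameters pinned at $0$ or $1$, and the induction on the reduced $n$) is then routine.
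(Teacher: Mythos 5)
Your proposal is correct, and it shares the paper's overall strategy: both arguments fix the mean $\lambda=\sum_i p_i$ and maximise $\P(U\le 1)$ over the simplex $\{p\in[0,1]^n : \sum_i p_i=\lambda\}$, concluding that the extremal configurations are (after discarding coordinates pinned at $0$, which lowers $n$) either all parameters equal, giving $\mathrm{binomial}(\lambda/n,n)$, or one parameter equal to $1$ and the rest equal, giving $1+\mathrm{binomial}\left(\frac{\lambda-1}{n-1},n-1\right)$. The difference lies in how the structure of the maximiser is extracted. The paper handles interior critical points by Lagrange multipliers, asserting that symmetry and per-coordinate linearity of $L_n(q)$ force all coordinates equal at a critical point; it bounds the face $q_i=1$ directly by the arithmetic--geometric mean inequality (which yields $\P(V_{n,1}\le 1)$ with no claim about the remaining coordinates), and it recurses on the face $q_i=0$ via $L_n(q)=L_{n-1}(q^i)$. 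You instead run a pairwise smoothing argument: at fixed $p_i+p_j$ the objective is affine in $p_ip_j$, so a maximiser chosen with the greatest number of coordinates in $\{0,1\}$ must have all its coordinates in $(0,1)$ equal; at most one coordinate can equal $1$ (else the probability vanishes while the symmetric point gives a positive value), and zero coordinates are removed by induction on $n$. Your route is more elementary and somewhat more airtight: it treats interior and boundary uniformly and avoids the paper's rather terse claim that the Lagrange condition for a symmetric, coordinatewise-affine function forces equal coordinates, at the cost of the explicit two-coordinate computation and the tie-breaking choice of maximiser. Both arguments are exact here, since the extremal laws they identify are precisely the members of $\mathcal{V}_n$, and your bookkeeping (existence of the maximiser by compactness, exclusion of two coordinates equal to $1$, the $\lambda\in\{0,n\}$ and $n=1$ edge cases, and the induction preserving the mean) is complete.
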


\begin{proof}
Let $U := \sum_{i=1}^n B_i$ where $B_i$ are Bernoulli random variables, 
$q_i := \E B_i$ and $q := (q_1, q_2, \dots, q_n)$. We have 
\begin{align}
\P(U\le 1) &= \P\left(\sum_{i=1}^n B_i =0\right) + \P\left(\sum_{i=1}^n B_i =1\right) \\
		&= \prod_{i=1}^n (1-q_i) + \sum_{j=1}^n q_j \prod_{i=1 : n, i \neq j} (1-q_i) 
=: L_n(q ) .
\end{align}

Consider maximizing $L_n(q)$ over $q \in \{ [0, 1]^n \mid \lambda = \sum_{i=1}^n q_i \}$
noting that maxima might lie on the interior with $q_i \in (0,1)$ for all $1\le i\le n$ or on the boundary with $q_i \in \{0, 1\}$ for some $i$.
Since $L_n(q)$ is a differentiable function of $q$, any critical point of $L_n(q)$ on the interior with
$q \in \{ (0, 1)^n \mid \lambda = \sum_{i=1}^n q_i \}$ must satisfy
\begin{align}
\label{LagrangeCondition}
\nabla_{q_k} \left(L_n(q) + \mu \sum_{i=1}^n q_i\right) = 0 \ \text{ for all $1\le k \le n$ }
\end{align}
for a suitable Lagrange multiplier $\mu$.
However, $L_n(q)$ is a symmetric linear function of each $q_k$. 
So if $n\ge 2$ then
any solution of equation~(\ref{LagrangeCondition}) must have
$q_k = q_l$ for all $k\neq l$ in $1, \dots, n$. 
Thus 
$q = \left( \frac{\lambda}{n}, \frac{\lambda}{n}, \dots, \frac{\lambda}{n}\right)$
for which $U$ corresponds to the random variable $V_{n,0} \sim \mathrm{binomial}\left(\frac{\lambda}{n}, n\right)$ which is in $\mathcal{V}_n$ and has
$\E V_{n,0} = \lambda$.

If $q_i = 1$ for some $i$ and $n\ge 2$ then the arithmetic-geometric mean inequality gives
\begin{align}
\label{Gbit}
L_n(q) &= \prod_{j=1 : n, j \neq i} (1-q_j) \le \left( 1 - \frac{\lambda - 1}{n-1} \right)^{n-1} .\end{align}
The right-hand side is $\P(V_{n,1} \le 1)$ for the random variable $V_{n,1} \sim
1+\mathrm{binomial}\left(\frac{\lambda-1}{n-1}, n-1\right)$
for which $V_{n,1} \in \mathcal{V}_n$ and $\E V_{n,1} = \lambda$.

If $q_i = 0$ for some $i$ and $n\ge 2$ then the definition of $L_n(q)$ gives
\begin{align}
\label{Lbit}
L_n(q) = L_{n-1}(q^i)
\text{ where $q^i := (q_1, \dots, q_{i-1}, q_{i+1}, \dots, q_n)$} .
\end{align}
However to have $q_i = 0$ for some $i$ we require that 
$\lambda = \sum_{j=1, j\ne i}^n q_j \le n-1$. 

In summary, if $q \in \{ [0, 1]^n \mid \lambda = \sum_{i=1}^n q_i \}$
and $n\ge 2$ then
\begin{align*}
L_n(q) &\le \begin{cases}
\max_{1\le i\le n} \{ L_{n-1}(q^i), \P(V_{n,0} \le 1), \P(V_{n,1} \le 1) \} & \text{if $0\le\lambda\le n-1$} \\
\max\{\P(V_{n,0} \le 1), \P(V_{n,1} \le 1)\} & \text{if $n-1<\lambda\le n$} ,
\end{cases} 
\end{align*}
and for $n=1$, consider the random variable $V_{1,1} := \mathrm{binomial}(\lambda, 1)$ for which $V_{1,1} \in \mathcal{V}_1$, $\P(U\le 1) = \P(V_{1,1} \le 1)$ and $\lambda = \E V_{1,1}$.
Thus 
\begin{align*}
\P(U\le 1) &\le \max_{\{ V \mid V\in \cup_{m=1}^n \mathcal{V}_m, \E V = \lambda \}} \P(V \le 1)  \end{align*}
which completes the proof. 
\end{proof}

\begin{lemma}
\label{lemmaV}
Let $H_n(\lambda) := \sup \{ \P(V\le 1) \mid V\in \mathcal{V}_n, \E V = \lambda \}$
with the convention that $\sup\emptyset = 0$. Then
\begin{align}
H_{n}(\lambda) &\le H_{n'}(\lambda') & \text{ for all $0 \le \lambda' \le \lambda$ and all $1\le n\le n'$} . 
\end{align}
\end{lemma}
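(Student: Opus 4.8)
The plan is to derive Lemma~\ref{lemmaV} from two separate monotonicities of $H_n$: that $\lambda\mapsto H_n(\lambda)$ is non-increasing on $[1,\infty)$, and that $n\mapsto H_n(\lambda)$ is non-decreasing. Granting these, for $n\le n'$ and $1\le\lambda'\le\lambda$ one chains $H_n(\lambda)\le H_{n'}(\lambda)\le H_{n'}(\lambda')$; for $\lambda'<1$ the inequality is not needed in the sequel (there the right-hand side of~(\ref{theorem1_1}) already exceeds $1$ and Theorem~\ref{theorem1} is vacuous), so it suffices to treat $\lambda'\ge1$. For any $\lambda$ the set $\{V\in\mathcal{V}_n:\E V=\lambda\}$ consists of at most the two variables $\mathrm{binomial}(\lambda/n,n)$ and $1+\mathrm{binomial}(\tfrac{\lambda-1}{n-1},n-1)$, the second present exactly when $1\le\lambda\le n$; writing $g_n(\lambda)=(1+\lambda-\tfrac{\lambda}{n})(1-\tfrac{\lambda}{n})^{n-1}$ and $h_n(\lambda)=(1-\tfrac{\lambda-1}{n-1})^{n-1}$ for their values of $\P(\,\cdot\le1)$, we have $H_n(\lambda)=\max\{g_n(\lambda),h_n(\lambda)\}$ on $[1,n]$, $H_n(\lambda)=g_n(\lambda)$ on $[0,1)$, and $H_n(\lambda)=0$ for $\lambda>n$.

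For the monotonicity in $\lambda$ I would show $g_n$ and $h_n$ are each non-increasing. For $h_n$ this is immediate since $\tfrac{\lambda-1}{n-1}$ increases in $\lambda$. For $g_n$, writing $p=\lambda/n$ so that $g_n=(1-p)^n+np(1-p)^{n-1}$, the derivative in $p$ equals $-n(n-1)p(1-p)^{n-2}\le0$. Hence $H_n=\max\{g_n,h_n\}$ is non-increasing on $[1,n]$ and equals $0$ beyond $n$, so $\lambda\mapsto H_n(\lambda)$ is non-increasing on $[1,\infty)$.

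For the monotonicity in $n$ it suffices, by iteration, to prove $H_n(\lambda)\le H_{n+1}(\lambda)$ for each $\lambda\ge1$. The shifted-binomial term is easy: $h_n(\lambda)=(1-\tfrac{\lambda-1}{n-1})^{n-1}\le(1-\tfrac{\lambda-1}{n})^{n}=h_{n+1}(\lambda)$ by the classical fact that $m\mapsto(1-x/m)^m$ is non-decreasing for $0\le x\le m$ (here $x=\lambda-1$); thus $H_{n+1}(\lambda)\ge h_n(\lambda)$, which settles the case $H_n(\lambda)=h_n(\lambda)$ and also the case $\lambda>n$. In the remaining case $g_n(\lambda)>h_n(\lambda)$ — which, one checks, confines $\lambda$ to an interval $(\lambda_n^*,n]$ — I would show $g_{n+1}(\lambda)\ge g_n(\lambda)$, equivalently that $m\mapsto\P(\mathrm{binomial}(\lambda/m,m)\le1)$ is non-decreasing at $m=n$ there. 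At the left endpoint $\lambda_n^*$ one has $g_n(\lambda_n^*)=h_n(\lambda_n^*)$, and using that $\lambda_n^*$ is non-increasing in $n$ (from its explicit form $\lambda_n^*=\tfrac{n}{n-1}((1+\tfrac1{n-1})^{n-1}-1)$) so that $\lambda_n^*$ already lies past the crossover $\lambda_{n+1}^*$, one gets $g_{n+1}(\lambda_n^*)\ge h_{n+1}(\lambda_n^*)\ge h_n(\lambda_n^*)=g_n(\lambda_n^*)$; at the right endpoint $g_n(n)=0$.

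Proving $g_{n+1}(\lambda)\ge g_n(\lambda)$ throughout $(\lambda_n^*,n]$ is the main obstacle. A useful structural fact is that $g_m''(\lambda)=\tfrac{m-1}{m}(1-\lambda/m)^{m-3}\big(\tfrac{(m-1)\lambda}{m}-1\big)$, so $g_m$ is convex in $\lambda$ precisely for $\lambda\ge\tfrac{m}{m-1}$; since $\lambda_n^*\ge\tfrac{n}{n-1}$, both $g_n$ and $g_{n+1}$ are convex on the range in question, and it then remains either to show $g_{n+1}-g_n$ is concave there (so that non-negativity at the two endpoints suffices) or to obtain $g_{n+1}(\lambda)-g_n(\lambda)\ge0$ directly — writing $a=1-\lambda/n$, $a'=1-\lambda/(n+1)$, this amounts to $(a')^{n}(a'+\lambda)\ge a^{n-1}(a+\lambda)$, attacked via $a'>a$ together with a lower bound for $(a'/a)^{n-1}$ sharper than first-order Bernoulli and the defining relation $1+\lambda\tfrac{n-1}{n}\ge(1+\tfrac1{n-1})^{n-1}$ of the range $\lambda\ge\lambda_n^*$. (An alternative route that trades some algebra for Lemma~\ref{lemmaU}: embed $\mathrm{binomial}(\lambda/n,n)$ into $\mathcal{U}_{n+1}$ with the same mean via $\mathrm{binomial}(\lambda/n,n-1)+\mathrm{binomial}(\lambda/(2n),2)$, whose probability of being $\le1$ exceeds $g_n(\lambda)$ as soon as $\lambda\ge1$; but Lemma~\ref{lemmaU} then only bounds this by $\max_{m\le n+1}H_m(\lambda)$, so it must be fed back through the induction on $n$.) With $H_n(\lambda)\le H_{n+1}(\lambda)$ in hand, iterating it and combining with the monotonicity in $\lambda$ completes the proof.
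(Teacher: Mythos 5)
Your overall strategy is the same as the paper's: split $H_n$ into the two pieces $F_n$ (your $g_n$) and $G_n$ (your $h_n$), prove each is non-increasing in $\lambda$, prove $G_n\le G_{n+1}$, locate the crossover $\lambda_n^*=\left(\tfrac{n}{n-1}\right)^n-\tfrac{n}{n-1}$, and reduce everything to showing that $F_n(\lambda)\le F_{n+1}(\lambda)$ on the range where $F_n$ can be the maximum. All of those preliminary steps are correct. But the one step you yourself call ``the main obstacle'' --- $g_{n+1}(\lambda)\ge g_n(\lambda)$ on $(\lambda_n^*,n]$ --- is exactly the technical heart of the lemma, and you do not prove it: you only verify it at the two endpoints and then list candidate strategies (concavity of $g_{n+1}-g_n$, a sharpened Bernoulli bound for $(a'/a)^{n-1}$, or a detour through Lemma~\ref{lemmaU} that you concede is circular), none of which is carried out. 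Endpoint positivity plus convexity of $g_n$ and $g_{n+1}$ separately does not give positivity of the difference, and the concavity of $g_{n+1}-g_n$ on that interval is not established (nor obviously true). The paper closes this gap with a concrete computation: writing $x=1-\lambda/n$, it shows via (\ref{uDef}) that $u(x)\ge u(1)=0$ whenever $\lambda\ge 2/\sqrt{3}$, because $\min_x\left(x^2+(\lambda-2)x+\lambda^2-\lambda\right)=(3\lambda^2-4)/4\ge 0$ then forces $\nabla u\le 0$ on $(0,1]$; this yields $F_n(\lambda)\le F_{n+1}(\lambda)$ for all $\lambda\ge 2/\sqrt{3}$, a range that contains your interval $(\lambda_n^*,n]$ since $\lambda_n^*\ge e-\tfrac32>2/\sqrt{3}$ for $n\ge 3$ (and $\lambda_2^*=2$). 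Without some such argument the lemma is not proved.

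Two smaller points. First, your endpoint argument at $\lambda_n^*$ relies on $\lambda_n^*$ being non-increasing in $n$, which you assert ``from its explicit form'' but do not prove; it is true, but it needs an argument (the paper avoids needing it by comparing against the fixed threshold $2/\sqrt{3}$ and using $G_2-F_2=(\lambda-2)^2/4\ge0$ for $n=2$). Second, you discharge the case $\lambda'<1$ by appealing to the fact that Theorem~\ref{theorem1} is vacuous there; the lemma as stated covers all $0\le\lambda'\le\lambda$, and the paper handles this range simply by taking $H_n(\lambda)=1$ on $[0,1]$ (the mean-$1$ member $1+\mathrm{binomial}(0,n-1)$ of $\mathcal{V}_n$ already attains $\P(V\le1)=1$, and $H_{n'}(\lambda')\ge H_{n'}(1)=1$ is what is needed), so no appeal to the sequel is necessary.
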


\begin{proof}
The definition of $\mathcal{V}_n$ gives
\begin{align}
H_n(\lambda) &= \begin{cases}
1 & \text{if } 0\le \lambda \le 1 \\
\max\{F_n(\lambda), G_n(\lambda)\} & \text{if } 1 < \lambda < n \\
0 & \text{if } n \le \lambda
\end{cases} 
\intertext{where for $1\le \lambda\le n$}
F_n(\lambda) &:= 
\P\left(\mathrm{binomial}\left(\frac{\lambda}{n}, n\right) \le 1\right) 
= \left( 1 - \frac{\lambda}{n} \right)^n + \lambda \left( 1 - \frac{\lambda}{n}\right)^{n-1} \\
G_n(\lambda) &:= 
\P\left(1 + \mathrm{binomial}\left(\frac{\lambda-1}{n-1}, n-1\right) \le 1\right) 
= \left( 1 - \frac{\lambda - 1}{n-1} \right)^{n-1} ,
\end{align}
so let us collect some facts about $F_n(\lambda)$ and $G_n(\lambda)$.

First, set $x:=1-\lambda / n$ so we have $n = \lambda / (1-x)$ and
\begin{align}
\label{gDef}
\log F_{n}(\lambda) &= \log ( x^n + \lambda x^{n-1} ) 
= \frac{\lambda \log x}{1-x} + \log \left(1 + \frac{\lambda}{x} \right) =: g(x) . 
\end{align}
Now
\begin{align}
\label{uDef}
\frac{(1-x)^2}{\lambda} \nabla g(x) &=
 \log x + \frac{1-x}{x} - \frac{(1-x)^2}{x(x+\lambda)} =: u(x) 
\intertext{and } \nabla u(x) &= 
\frac{x-1}{x^2 (x+\lambda)^2} (x^2 + (\lambda-2) x + \lambda^2 - \lambda) .
\end{align}
Note that $\min_{x\in\mathbb{R}} (x^2 + (\lambda-2) x + \lambda^2 - \lambda) = (3 \lambda^2 - 4) / 4$. Thus if $\lambda \ge 2 / \sqrt{3}$ then
$\nabla u(x) \le 0$ for all $0<x\le 1$ and so $u(x) \ge u(1) = 0$.
Thus $\nabla g(x) \ge 0$, by (\ref{uDef}), so that 
$\log F_n(\lambda)$ is increasing in $n$, by (\ref{gDef}) and from the fact that
$n = \lambda / (1-x)$ is increasing in $x$ for fixed $\lambda$.
Hence
\begin{align}
\label{FIneq}
F_n(\lambda) \le F_{n+1}(\lambda) \ \text{ for all $2 / \sqrt{3}\le\lambda<n$ and $n\ge 1$.}
\end{align}

Second, Taylor expansion gives
\begin{align}
\log G_{n+1}(\lambda) = n \log\left(1 - \frac{\lambda}{n}\right) 
= - \lambda - \sum_{k=2}^\infty \frac{\lambda^k}{k n^{k-1}} 
\qquad \text{for $\left|\frac{\lambda}{n}\right| < 1$}
\end{align}
which is a non-decreasing function of $n$ for $\lambda \ge 0$. Thus
\begin{align}
\label{GIneq}
G_{n}(\lambda) \le G_{n+1}(\lambda) \ \text{ for all $0\le \lambda<n$ and $n\ge 1$}.
\end{align}

Third, considering the range of $\lambda$ for which $F_n(\lambda) \le G_n(\lambda)$ gives
\begin{align}
&& \left( 1 - \frac{\lambda}{n} \right)^n + \lambda \left( 1 - \frac{\lambda}{n}\right)^{n-1}
&\le\left( 1 - \frac{\lambda - 1}{n-1} \right)^{n-1} \\
&\Leftrightarrow& 1 - \frac{\lambda}{n} + \lambda 
&\le \left( \frac{1 - \frac{\lambda-1}{n-1}}{1 - \frac{\lambda}{n} } \right)^{n-1} 
= \left( \frac{n}{n-1} \right)^{n-1} \\
\label{FGCondition}
&\Leftrightarrow& \lambda&\le\left( \frac{n}{n-1} \right)^n - \frac{n}{n-1} . 
\end{align}
Applying the inequality $\log x \le x - 1$ to $x = \frac{n-1}{n}$ we see that
$n \log \frac{n-1}{n} \le -1$, hence
$\left( \frac{n}{n-1} \right)^n \ge e$. So for $n\ge 3$ we have 
\begin{align}
\label{LambdaN}
\left( \frac{n}{n-1} \right)^n - \frac{n}{n-1} \ge e - \frac{3}{2} > \frac{2}{\sqrt{3}} .
\end{align}
Additionally $G_2(\lambda)-F_2(\lambda) = (\lambda - 2)^2 / 4 \ge 0$ for all $\lambda\in\mathbb{R}$.
In conjunction with~(\ref{FGCondition}) and~(\ref{LambdaN}) this gives
\begin{align}
\label{FGIneq}
F_n(\lambda) \le G_n(\lambda) \text{ for all $\lambda < \frac{2}{\sqrt{3}}$ and $n\ge 2$.}
\end{align}

Now consider the function $H_n(\lambda)$.
The definition of $H_n(\lambda)$ gives 
\begin{align}
H_n(\lambda) &= H_{n+1}(\lambda) = 1 & \text{for all $0\le\lambda\le 1$ and $n\ge 1$}
\\
0 = H_n(\lambda) &\le H_{n+1}(\lambda) & \text{for all $n \le \lambda < n+1$ and $n\ge 1$} \\
H_n(\lambda) &= H_{n+1}(\lambda) = 0 & \text{for all $\lambda \ge n+1$ and $n\ge 1$.}
\end{align}
For all $1 < \lambda < \frac{2}{\sqrt{3}}$ and all $n\ge 2$, (\ref{GIneq}) and (\ref{FGIneq}) give
\begin{align}
H_n(\lambda) &= \max\{F_n(\lambda), G_n(\lambda)\} 
			= G_n(\lambda) \le G_{n+1}(\lambda) \\
			&\le \max\{F_{n+1}(\lambda), G_{n+1}(\lambda)\} 
		= H_{n+1}(\lambda) .
\intertext{For all $\frac{2}{\sqrt{3}} \le \lambda < n$ and all $n\ge 2$, (\ref{FIneq}) and (\ref{GIneq}) give}
H_n(\lambda) &= \max\{F_n(\lambda), G_n(\lambda)\} \\
		&\le \max\{F_{n+1}(\lambda), G_{n+1}(\lambda)\} = H_{n+1}(\lambda) .
\intertext{In summary}
\label{HIneq}
H_n(\lambda) &\le H_{n+1}(\lambda) \qquad \text{ for all $\lambda \ge 0$ and $n\ge 1$}
\end{align}
showing that $H_n(\lambda)$ is non-decreasing in $n$.

Finally, $\nabla_\lambda F_n(\lambda) = - \frac{n-1}{n} \left( 1 - \frac{\lambda}{n} \right)^{n-1} \le 0$ and $\nabla_\lambda G_n(\lambda) \le 0$, so $H_n(\lambda)$ is non-increasing in $\lambda$. This completes the proof. 
\end{proof}

\begin{proof}[Theorem~\ref{theorem1}]
By Lemmas~\ref{lemmaS}, ~\ref{lemmaT} and~\ref{lemmaU}, there
exist random sums $T\in\mathcal{T}_n$, $U\in\cup_{m=1}^n \mathcal{U}_m$ 
and $V\in\cup_{m=1}^n\mathcal{V}_m$ such that
\begin{align*}
\label{BindAll}
\P(S\le 1) \le \P(T\le 1)  \le \P(U\le 1)  \le \P(V\le 1) 
\text{ and }  \E S = \E T \le \E U = \E V .
\end{align*}
Say that $V\in \mathcal{V}_m$ for some $1\le m\le n$ and let
$\lambda_V := \E V$. Then $\lambda_V \ge \lambda =: \E S$ as just shown,
so Lemma \ref{lemmaV} gives
\begin{align*}
\P(V\le 1) &\le H_m(\lambda_V) \le H_n(\lambda) .
\end{align*}
Now, by definition of $H_n(\lambda)$, for $0\le \lambda \le n$ we have $H_n(\lambda) = \max\{F_n(\lambda), G_n(\lambda) \}$ where
$F_n(\lambda) := \left(1 + \lambda -\frac{\lambda}{n}\right)\left( 1 - \frac{\lambda}{n} \right)^{n-1}$ and $G_n(\lambda) := \left( 1 - \frac{\lambda - 1}{n-1} \right)^{n-1},$ so that
\begin{align}
\P(S\le 1) \le \max\{ F_n(\lambda), G_n(\lambda) \}
\end{align}
which proves~(\ref{theorem1_1}). Furthermore, Lemma~\ref{lemmaV} gives
\begin{align}
H_n(\lambda) \le 
\lim_{m\rightarrow\infty} \max\{  F_m(\lambda), G_m(\lambda) \}
= \max\{ 1+\lambda, e \} e^{-\lambda}
\end{align}
which completes the proof. 
\end{proof}

\section{Application}
If we wish to bound the expectation of a random sum,
then Theorem~\ref{theorem1} can be conveniently rearranged as follows.
\begin{corollary}
\label{Corollary1}
Suppose that $S = \sum_{i=1}^n X_i$ is a sum of independent random variables with
$\mathbb{P}(0\leq X_i \leq 1)=1$ for $1\leq i\leq n$. 
Then for $r = 0.841405\dots$, we have
\begin{align}
\P (S \leq 1) \leq e^{1 - r \E S} \text{ or equivalently }
\E S \leq \frac{1}{r} \left(1 - \log \mathbb{P}(S\leq 1) \right) .
\end{align}
\end{corollary}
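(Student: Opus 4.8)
The plan is to feed the $n$-independent bound of Theorem~\ref{theorem1}, inequality~(\ref{theorem1_2}), into an elementary one-variable optimization. Writing $\lambda := \E S$, that bound reads $\P(S\le 1) \le \max\{1+\lambda,e\}\,e^{-\lambda}$, so it suffices to show that $\max\{1+\lambda,e\}\,e^{-\lambda} \le e^{1-r\lambda}$ for every $\lambda\ge 0$. Taking logarithms, this is equivalent to
\begin{align*}
\log\max\{1+\lambda,e\} \le 1 + (1-r)\lambda \qquad \text{for all } \lambda\ge 0 .
\end{align*}
For $0\le\lambda\le e-1$ the left side equals $\log e = 1$ and, since $r<1$, the right side is at least $1$; so the inequality holds. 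For $\lambda\ge e-1$ the left side is $\log(1+\lambda)$, and dividing by $\lambda>0$ rewrites the inequality as $r\le\phi(\lambda)$, where $\phi(\lambda):=1+\tfrac{1-\log(1+\lambda)}{\lambda}$. Hence it is enough to prove $r=\inf_{\lambda\ge e-1}\phi(\lambda)$.

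To find this infimum I would put $y:=1+\lambda$ and differentiate: a short computation shows that the stationary points of $\phi$ on $(e-1,\infty)$ are exactly the solutions of $\log y + 1/y = 2$. Since $\psi(y):=\log y+1/y$ has $\psi'(y)=(y-1)/y^2$, it decreases on $(0,1)$ and increases on $(1,\infty)$ from $\psi(1)=1$, so $\psi(y)=2$ has a unique root $y^\ast>1$; numerically $y^\ast\approx 6.3$, so $\lambda^\ast:=y^\ast-1\approx 5.3$ indeed lies in $(e-1,\infty)$. Because $\phi(e-1)=1$ (as $\log e=1$) and $\phi(\lambda)\to 1$ as $\lambda\to\infty$, while $\phi(\lambda^\ast)<1$, this lone stationary point must be the global minimum of $\phi$ over $[e-1,\infty)$.

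The decisive simplification is that this minimum value has a closed form. From $\log y^\ast = 2 - 1/y^\ast$ we get $1-\log y^\ast = -(y^\ast-1)/y^\ast$, hence $\phi(\lambda^\ast)=1-1/y^\ast$. Writing $r:=1-1/y^\ast$, so $y^\ast=1/(1-r)$, the defining equation $\log y^\ast + 1/y^\ast=2$ becomes $\log\tfrac{1}{1-r}=1+r$, i.e. $(1-r)e^{1+r}=1$, equivalently $(1-r)e^{r}=e^{-1}$; solving this transcendental equation numerically yields $r=0.841405\dots$, as claimed. This gives $\P(S\le 1)\le e^{1-r\E S}$, and taking logarithms and rearranging (with the convention $-\log 0=+\infty$) gives the equivalent form $\E S\le\tfrac1r\bigl(1-\log\P(S\le 1)\bigr)$. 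The only real work here is the calculus: checking that stationarity collapses exactly to $(1-r)e^{1+r}=1$, and confirming that the single interior critical point is a minimum over the whole half-line $\lambda\ge e-1$ rather than a maximum or an inflection point.
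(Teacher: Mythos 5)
Your proposal is correct and is essentially the paper's argument: both start from the $n$-independent bound~(\ref{theorem1_2}) and reduce the corollary to a one-variable calculus problem whose critical-point condition is the same transcendental equation $(1-r)e^{1+r}=1$, i.e.\ $a=e^{a-2}$ with $a=1-r=0.158594\dots$. The only difference is organizational — the paper fixes the candidate constant and maximizes the concave function $\log(1+m)-am$ over $m$, while you minimize the implied constraint $\phi(\lambda)$ over $\lambda$ — which changes nothing of substance.
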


\begin{proof}
We work with the right-hand side of Theorem~\ref{theorem1} to find the smallest $a$ such that 
\begin{align*}
\max\{e, 1+m \} e^{-m} \leq e^{1-m+am}
\end{align*}
for all $m\ge 0$,
or equivalently, such that 
\begin{align*}
\log(1+m) - a m \leq 1.
\end{align*}
For fixed $a\geq 0$ the left-hand side is concave with a unique maximum at $m = \frac{1}{a}-1$.
Substituting this $m$, we require that 
\begin{align*}
a - \log a \leq 2.
\end{align*}
Now the function $a - \log a$ is decreasing for $a \leq 1$, thus we require that $a \geq a_0$ where $a_0$ 
is the root of $a_0 = e^{a_0 - 2}$ having $a_0 \leq 1$.
A fixed point method yields the solution $a_0 = 0.158594\dots = 1-r$. 
\end{proof}

\section*{Acknowledgements}
The author thanks the anonymous reviewer, Christophe Leuridan, Bin Yu, Nicol{\` o} Cesa-Bianchi, Onno Zoeter and Shengbo Guo for helpful comments and discussions.

The final version of this publication is (or will be) available at springerlink.com in the Journal of Theoretical Probability.

\bibliographystyle{spmpsci}

\end{document}